\definecolor{webgreen}{rgb}{0,.5,0}
\definecolor{webbrown}{rgb}{.6,0,0}
\tikzset{circle node/.style = {circle,inner sep=1pt,draw, fill=white},
        X node/.style = {fill=white, inner sep=1pt},
        dot node/.style = {circle, draw, inner sep=5pt}
        }
\newtheorem{theorem}{Theorem}
\newtheorem{proposition}[theorem]{Proposition}
\theoremstyle{definition}
\newtheorem{example}[theorem]{Example}
\newcommand{\seqnum}[1]{\href{http://oeis.org/#1}{\underline{#1}}}
\begin{document}

\begin{center}
\vskip 1cm{\LARGE\bf A Riordan array family for some integrable lattice models} \vskip 1cm \large
Paul Barry\\
School of Science\\
South East Technological University\\
Ireland\\
\href{mailto:pbarry@wit.ie}{\tt pbarry@wit.ie}
\end{center}
\vskip .2 in

\begin{abstract} We study a family of Riordan arrays whose square symmetrizations lead to the Robbins numbers as well as numbers associated to the $20$ vertex model. We provide closed-form expressions for the elements of these arrays, and also give a canonical Catalan factorization for them. We describe a related family of Riordan arrays whose symmetrizations also lead to the same integer sequences.
\end{abstract}

\section{Introduction}

One of the outstanding mathematical stories of the last century (brilliantly recounted in \cite{Bressoud}) was that of the Robbins numbers \seqnum{A005130}, which linked the six-vertex integrable lattice model to alternating sign matrices and plane partitions, thus building a bridge between integrable models and combinatorics. The Robbins numbers $A_n$ are the sequence of integers that begins
$$1, 1, 2, 7, 42, 429, 7436, 218348, 10850216, 911835460, 129534272700, 31095744852375,\ldots$$ 
with 
$$A_n= \prod_{k=0}^{n-1} \frac{(3k+1)!}{(n+k)!}.$$ 
The corresponding numbers $B_n$ \seqnum{A358069} for the $20$-vertex integrable lattice model (with domain wall boundary conditions) \cite{DiF} begin 
$$1, 3, 23, 433, 19705, 2151843, 561696335, 349667866305, 518369549769169,\ldots.$$ Note that we have used the On-Line Encyclopedia of Integer Sequences designations \cite{SL1, SL2} for these sequences.

Our main result is the following.
\begin{proposition} We consider the square symmetrization of the Riordan array family
$$\mathfrak{R}_r=\left(\frac{1}{(1-rx)\sqrt{1-4x}}, xc(x)\right),$$ where 
$c(x)=\frac{1-\sqrt{1-4x}}{2x}$ is the generating function of the Catalan numbers. Then the principal minor sequences of $\mathfrak{R}_1$ and of $\mathfrak{R}_2$ are given by $A_{n+1}$ and $B_n$, respectively.
\end{proposition}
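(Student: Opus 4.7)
The plan has three main stages. First, I would derive an explicit closed form for the entries of $\mathfrak{R}_r$. Writing $\mathfrak{R}_r = (g_r(x), f(x))$ with $g_r(x) = \frac{1}{(1-rx)\sqrt{1-4x}}$ and $f(x) = xc(x)$, the general term is $T_r(n,k) = [x^n]\, g_r(x)\, f(x)^k$. Using the standard expansions $c(x)^k = \sum_{m \ge 0} \frac{k}{2m+k}\binom{2m+k}{m} x^m$, $\frac{1}{\sqrt{1-4x}} = \sum_{n \ge 0}\binom{2n}{n} x^n$, and $\frac{1}{1-rx} = \sum_{n \ge 0} r^n x^n$, a convolution expresses $T_r(n,k)$ as a finite sum of products of central binomial coefficients and ballot numbers weighted by powers of $r$.

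Second, I would compute the entries $S_r(n,k)$ of the square symmetrization of $\mathfrak{R}_r$ directly from the explicit formula for $T_r(n,k)$. The resulting double sum should collapse, via the Vandermonde--Chu identity or a closely related hypergeometric simplification, into a compact single-sum or product form. At this point I would numerically verify the first several rows of $S_1$ and $S_2$ against the OEIS entries \seqnum{A005130} and \seqnum{A358069} to pin down indexing conventions before attempting the general identity.

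Third, and at the core of the argument, I would establish the determinantal identity
$$\det\bigl[S_r(i,j)\bigr]_{0 \le i,j \le n-1} = \begin{cases} A_n, & r=1, \\ B_{n-1}, & r=2. \end{cases}$$
For $r=1$ the natural targets are the Mills--Robbins--Rumsey and Izergin--Korepin style determinantal formulas for alternating sign matrices; I would attempt explicit row and column operations on $S_1$ to reduce it to one of these forms, or alternatively invoke the Lindström--Gessel--Viennot lemma to reinterpret the determinant as counting non-crossing families of lattice paths in bijection with monotone triangles. For $r=2$ the target is the determinantal formula of Di Francesco \cite{DiF} for the 20-vertex model with domain-wall boundary conditions, and the same reduction strategy should apply.

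I expect the last stage to be the main obstacle. The derivations of $T_r(n,k)$ and of the symmetrized entries $S_r(n,k)$ are essentially mechanical, but matching the principal-minor determinant to the Izergin--Korepin form on the one hand and to Di Francesco's formula on the other will require case-specific row and column manipulations, with the $r=2$ case likely the more delicate since the 20-vertex determinant is structurally more involved than its six-vertex analogue.
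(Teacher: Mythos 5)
Your overall architecture---compute the symmetrized matrix, then convert its leading principal minors into a known determinantal formula for $A_{n+1}$ (respectively $B_n$)---matches the paper's, and your choice of targets (a Mills--Robbins--Rumsey-type determinant for the ASM numbers, and Di Francesco--Guitter's determinant for the 20-vertex model) is essentially what the paper relies on. But the step you defer as ``the main obstacle'' is precisely the entire content of the proof, and you give no mechanism for carrying it out, so as written there is a genuine gap. Moreover, your stages 1 and 2 are a detour that makes the problem look harder than it is: you do not need entry-level formulas for $T_r(n,k)$ followed by a Vandermonde--Chu collapse. The square symmetrization acts directly on the bivariate generating function $B(x,y)=g(x)/(1-yf(x))$ via $\mathfrak{S}(x,y)=B(xy,1/y)+B(xy,1/x)-g(xy)$, and for $\mathfrak{R}_1$ and $\mathfrak{R}_2$ this simplifies in a few lines of algebra to
$$\mathfrak{S}(x,y)=\frac{1}{(1-xy)(1-x-y)} \quad\text{and}\quad \mathfrak{S}(x,y)=\frac{1}{(1-2xy)(1-x-y)},$$
respectively. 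This simple rational form is the structural fact that drives everything.

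The missing idea in your stage 3 is that the ``row and column operations'' you hope to find are realized cleanly as multiplication on the left and right by unipotent Riordan arrays (unitriangular matrices), which preserve all leading principal minors and whose effect on a bivariate generating function is an explicit substitution via the fundamental theorem of Riordan arrays. For $r=1$, the paper takes the matrix $\binom{n+k}{k}-\delta_{n,k+1}$, with generating function $\frac{1}{1-x-y}-\frac{y}{1-xy}$ and principal minors classically equal to $A_{n+1}$, and moves it to $\frac{1}{(1-xy)(1-x-y)}$ by a single unipotent Toeplitz factor. For $r=2$, it takes Di Francesco--Guitter's matrix with generating function $\frac{2y}{(1-y)(1-x-y-xy)}+\frac{1}{1-xy}$, passes through the symmetric form $\frac{(1-x)(1-y)}{(1-xy)(1-x-y-xy)}$, conjugates by $\left(\frac{1}{1+x},\frac{x}{1+x}\right)$, and finally conjugates by the signing matrix $(1,-x)$, whose determinant contribution squares to $1$; the result is $\frac{1}{(1-2xy)(1-x-y)}$. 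By contrast, your fallback suggestions---reducing the determinant to the Izergin--Korepin partition function or running a Lindstr\"om--Gessel--Viennot argument---would each be a substantial independent project rather than a reduction to the cited results. Until the transformations are exhibited, the proof is not complete.
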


\section{Preliminaries on Riordan arrays}
In this section, we give a brief overview of the elements of the theory of Riordan arrays \cite{book1, book2, SGWW}.  To define Riordan arrays, we first of all set
$$\mathcal{F}_r= \{ f \in \mathcal{R}[[x]]\, |\, f(x)=\sum_{k=r}^{\infty} f_k x^k\}.$$
Here, $x$ is a ``dummy variable'' or indeterminate, and $\mathcal{R}$ is any ring in which the operations we will define make sense. Often, it can be any of the fields $\mathbb{Q}, \mathbb{R}$ or $\mathbb{C}$. When looking at combinatorial applications, it can be the ring of integers $\mathbb{Z}$ (in this case we demand that the diagonals of matrices consist of all $1$s).
We now let $g(x) \in \mathcal{F}_0$ and $f(x) \in \mathcal{F}_1$. This means that if $g(x)=\sum_{n=0}^{\infty} g_n x^n$, then $g_0 \ne 0$. Thus $g(x)$ has a multiplicative inverse $\frac{1}{g(x)} \in \mathcal{F}_0$. If $f(x)=\sum_{n=0}^{\infty}$, then we have $f_0=0$ and $f_1 \ne 0$. Then $f(x)$ will have a compositional inverse $\bar{f} \in \mathcal{F}_1$, also denoted by $f^{\langle -1 \rangle}$, which is the solution $v(x)$ of the equation $f(v)=x$ with $v(0)=0$. By definition, we have $\bar{f}(f(x))=x$ and $f(\bar{f}(x))=x$. A \emph{Riordan array} is then defined to be an element $(g, f) \in \mathcal{F}_0 \times \mathcal{F}_1$. The term ``array'' signifies the fact that every element $(g,  f)\in \mathcal{F}_0 \times \mathcal{F}_1$ has a matrix representation $(t_{n,k})_{0 \le n,k \le \infty}$ given by
$$ t_{n,k}=[x^n] g(x) f(x)^k.$$
Here, $[x^n]$ is the functional on $\mathcal{R}[[x]]$ that returns the coefficient of $x^n$ of an element in $\mathcal{R}[[x]]$.
With this notation, we can turn the set $\mathcal{F}_0 \times \mathcal{F}_1$ into a group using the product operation
$$(g(x), f(x)) \cdot (u(x), v(x)) = (g(x)u(f(x)), v(f(x)).$$
The inverse for this operation is given by
$$ (g(x), f(x))^{-1}= \left(\frac{1}{g(\bar{f}(x))}, \bar{f}(x)\right).$$
The identity element is given by $(1,x)$, which corresponds to the identity matrix.

When passing to the matrix representation, these operations correspond to matrix multiplication and taking the matrix inverse, respectively. Note that in the matrix representation, the generating functions of the columns are given by the geometric progression $g(x)f(x)^k$ in $\mathcal{R}[[x]]$. The operation of a Riordan array on a power series $h(x)=\sum_{n=0}^{\infty} h_n x^n$ is given by
$$ (g(x), f(x)) \cdot h(x) = g(x)h(f(x)).$$
This weighted composition rule is called the fundamental theorem of Riordan arrays.
In matrix terms, this is equivalent to multiplying the column vector $(h_0, h_1, h_2,\ldots)^T$ by the matrix $(t_{n,k})$.
Because $f \in \mathcal{F}_0$, Riordan arrays have lower-triangular matrix representatives. As an abuse of language, we use the term ``Riordan array'' interchangeably to denote either the pair $(g, f)$ or the matrix $(t_{n,k})$, letting the context indicate which is being referred to at the time.
The bivariate generating function of the array $(g, f)$ is given by
$$\frac{g(x)}{1-y f(x)}.$$
Thus we have
$$[x^n] g(x)f(x)^k = [x^n y^k] \frac{g(x)}{1-y f(x)}.$$
To see this, we have
\begin{align*}
[x^n y^k]\frac{g(x)}{1-y f(x)} &=
[x^n y^k]  g(x) \sum_{i=0}^{\infty} y^i f(x)^i\\
&=[x^n] g(x) [y^k]\sum_{i=0}^{\infty} y^i f(x)^i \\
&=[x^n] g(x) f(x)^k.\end{align*}
Setting $y=0$ and $y=1$, respectively, in the generating function, yields the generating functions of the row sums and the diagonal sums of the matrix $(t_{n,k})$. That is, the row sums of the Riordan array $(g(x), f(x))$ have generating function $\frac{g(x)}{1-f(x)}$, while the diagonal sums have generating function $\frac{g(x)}{1-xf(x)}$.

\section{Square symmetrization}
Given a Riordan array $(g(x), f(x))$, its generating function is the bivariate power series
$$B(x,y)=\frac{g(x)}{1-yf(x)}.$$
By the \emph{square symmetrization} of the Riordan array $(g(x),f(x))$ we mean the matrix whose generating function is given by
$$\mathfrak{S}(x,y)=B\left(xy, \frac{1}{y}\right)+B\left(xy, \frac{1}{x}\right)-g(xy).$$ 
It is clear that 
$$\mathfrak{S}(x,y)=\mathfrak{S}(y,x)$$ and hence the square symmetrization of a Riordan array is a symmetric matrix.
\section{Riordan arrays for the Robbins numbers}
Following \cite{Fib} we present two Riordan arrays whose symmetrizations lead to the Robbins numbers $A_{n+1}$.
\begin{example}
We consider the Riordan array
$$\left(\frac{1}{1+x+x^2}, \frac{x}{1+x}\right).$$ 
The generating function of this array is given by 
$$B(x,y)=\frac{\frac{1}{1+x+x^2}}{1-y\frac{x}{1+x}}.$$ 
Thus we have $$B(x,y)=\frac{1+x}{(1+x+x^2)(1+x-xy)}.$$ 
Then we have 
$$\mathfrak{S}(x,y)=\frac{1}{(1-y+xy)(1-x+xy)}.$$ 
The resulting symmetric matrix begins 
$$\left(\begin{array}{rrrrrrr}
1 & 1 & 1 & 1 & 1 &  1 & \cdots \\
1 & -1 &-2 & -3 & -4 & -5 & \cdots \\
1 & -2 & 0 & 2 & 5 & 9 & \cdots \\
1 & -3 & 2 & 1 & -1 & -6 & \cdots \\
1 & -4 & 5 & -1 &-1 & 0 & \cdots \\
1 & -5 & 9 & -6 & 0 & 0 & \cdots \\
\vdots & \vdots & \vdots & \vdots & \vdots & \vdots & \ddots
\end{array}\right).$$
Then the principal minor sequence of this matrix is given by $(-1)^{\binom{n}{2}}A_{n+1}$. 
\end{example}
\begin{example}
We next consider the Riordan array $\left(\frac{1}{(1-x)\sqrt{1-4x}}, xc(x)\right)$ of our proposition.
We have 
$$B(x,y)=\frac{(y-2)\sqrt{1-4x}-4xy+y}{2(1-x)(4x-1)(1-y+xy^2)}.$$ 
Then we find that 
$$\mathfrak{S}(x,y)=\frac{1}{(1-xy)(1-x-y)}.$$ 
It is a classical result that the Robbins numbers $A_{n+1}$ are given by the principal minors sequence of the matrix 
$$\binom{n+k}{k}-\delta_{n,k+1}$$ which has generating function 
$$\frac{1}{1-x-y} - \frac{y}{1-xy}.$$ 
Multiplying the matrix with this generating function by the unipotent Riordan array
$$\left(\frac{1}{(1-y)(\frac{1}{1-y}-y)},y\right)$$ produces the matrix with generating function $\mathfrak{S}(x,y)$. 
Since the Riordan array $\left(\frac{1}{(1-y)(\frac{1}{1-y}-y)},y\right)$ is unipotent, all its principal minors are $1$, which establishes the result.
\end{example}

\section{The sequence $B_n$ of the $20$-vertex model}
We now consider the Riordan array
$$\left(\frac{1}{(1-2x)\sqrt{1-4x}}, xc(x)\right).$$ 
We have
$$B(x,y)=\frac{(y-2)\sqrt{1-4x}-4xy+y}{2(1-x)(4x-1)(1-y+xy^2)},$$ and we find that this simplifies to give
$$\mathfrak{S}(x,y)=\frac{1}{(1-2xy)(1-x-y)}.$$ 
We must show that the principal minor sequence of the matrix with this generating function is $B_n$. From \cite{DiF}, we know that $B_n$ is given by the principal minor sequence of the matrix with generating function
$$\frac{2y}{(1-y)(1-x-y-xy)}+\frac{1}{1-xy}=\frac{(1-x)(1+y^2)}{(1-y)(1-xy)(1-x-xy)}.$$ 
Using the fundamental theorem of Riordan arrays (multiplying the corresponding matrix by appropriate unipotent Riordan arrays), we obtain the symmetric matrix with generating function
$$g(x,y)=\frac{(1-x)(1-y)}{(1-xy)(1-x-y-xy)}$$ which begins
$$\left(\begin{array}{rrrrrrr}
1 & 0 & 0 & 0 & 0 &  0 & \cdots \\
0 & 3 &2 & 2 & 2 & 2 & \cdots \\
0 & 2 & 9 & 12 & 16 & 20 & \cdots \\
0 & 2 & 12 & 35 & 62 & 98 & \cdots \\
0 & 2 & 16 & 62 &161 & 320 & \cdots \\
0 & 2 & 20 & 98 & 320 &803 & \cdots \\
\vdots & \vdots & \vdots & \vdots & \vdots & \vdots & \ddots
\end{array}\right).$$
We now multiply by the Riordan array $\left(\frac{1}{1+x}, \frac{x}{1+x}\right)$ on the left, and by its transpose on the right, to get the matrix with generating function
$$\frac{1}{(1+x)(1+y)} g\left(\frac{x}{1+x}, \frac{y}{1+y}\right)
=\frac{1}{(1-2xy)(1+x+y)}.$$  Multiplying this new matrix on the left and on the right by the Riordan array matrix $(1,-x)$ now gives us the matrix with generating function
$$\frac{1}{(1-2xy)(1-x-y)}.$$ As all matrix transformations (except for the last) involve unipotent matrices, the principal minor sequences remain the same. For the last transformation, the matrix $(1,-x)$ has principal minor sequence $(-1)^{\binom{n+1}{2}}$, which when multiplied by itself yields the unit sequence as required. We conclude that the matrices with generating functions $\frac{2y}{(1-y)(1-x-y-xy)}+\frac{1}{1-xy}$ and $\frac{1}{(1-2xy)(1-x-y)}$ have the same principal minor sequence, namely $B_n$.

\section{Further observations}
For general $r$, the principal minor sequence arising from the Riordan array $\left(\frac{1}{(1-rx)\sqrt{1-4x}}, xc(x)\right)$ begins
$$1, r + 1, r^3 + 2r^2 + 3·r + 1, r^6 + 3r^5 + 7r^4 + 13r^3 + 11r^2 + 6r + 1,\ldots.$$ 
For $r=0,\ldots,5$ we exhibit the first few elements of the corresponding sequences in matrix form.
$$\left(\begin{array}{rrrrrrr}
1 & 1 & 1 & 1 & 1 & 1 & \cdots \\
1 & 2 & 7 &   42 &    429 &       7436 & \cdots \\
1 & 3 & 23 &  433 &   19705 &     2151843 & \cdots \\
1 & 4 & 55 &  2494 &  365953 &    171944344 & \cdots \\
1 & 5 & 109 & 9993 &  3791001 &    5898286349 & \cdots \\
1 & 6 & 191 & 31306 &  26094301 &   109913708076 & \cdots \\
\vdots & \vdots & \vdots & \vdots & \vdots & \vdots & \ddots
\end{array}\right).$$
We have 
$$\left(\frac{1}{(1-rx)\sqrt{1-4x}}, xc(x)\right)^{-1}=\left((1-2x)(1-rx+rx^2), x(1-x)\right).$$
For $r=0$ and $r=1$, the principal minor sequences of the symmetrization of this matrix begin, respectively,
$$1, -3, -13, 81, 144, -2017, -1757, 79513, 22704,\ldots$$ and 
$$1, -4, -33, 427, 5046, -56241, -316626, 7178034, 26671624,\ldots.$$ 
We are not aware of any combinatorial interpretation of these numbers.

\section{A second family of Riordan arrays}
We consider the family of Riordan arrays $\tilde{\mathfrak{R}}_r$ defined by
\begin{align*}\tilde{\mathfrak{R}}_r&=\left(\frac{1}{(1-x)\sqrt{1-2(r+2)x+r^2x^2}}, \frac{1-rx-\sqrt{1-2(r+2)x+r^2x^2}}{2}\right)\\&=\left((1-x+rx+x^2)\frac{d}{dx}\left(\frac{x(1-x)}{1+x}\right), \frac{x(1-x)}{1+rx}\right)^{-1}.\end{align*}
\begin{proposition} The Robbins numbers $A_{n+1}$ and the numbers $B_n$ are given by the principal minor sequences of the symmetrizations of $\tilde{\mathfrak{R}}_0$ and  $\tilde{\mathfrak{R}}_1$, respectively.
\end{proposition}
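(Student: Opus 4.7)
The plan is to reduce each case of the proposition to material already established earlier in the paper.

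For $r=0$, the second component of $\tilde{\mathfrak{R}}_0$ simplifies as $\frac{1-\sqrt{1-4x}}{2}=xc(x)$, and the first component is $\frac{1}{(1-x)\sqrt{1-4x}}$. Thus $\tilde{\mathfrak{R}}_0=\mathfrak{R}_1$, the Riordan array treated in the second example of Section 4, and the statement about the Robbins numbers $A_{n+1}$ is immediate from Proposition 1.

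For $r=1$ the arrays $\tilde{\mathfrak{R}}_1$ and $\mathfrak{R}_2$ are genuinely different, so an explicit computation of the symmetrization is required. Writing $\rho(x)=\sqrt{1-6x+x^2}$, $g(x)=\frac{1}{(1-x)\rho(x)}$, and $f(x)=\frac{1-x-\rho(x)}{2}$, the bivariate generating function of $\tilde{\mathfrak{R}}_1$ is $\tilde{B}(x,y)=g(x)/(1-yf(x))$. I would first rationalize $\tilde{B}$ using the quadratic identity $f(x)^{2}-(1-x)f(x)+x=0$, which follows directly from squaring the definition of $f$: multiplying numerator and denominator of $\tilde{B}$ by $1-y\hat f(x)$, where $\hat f(x)=(1-x+\rho(x))/2$ is the conjugate root of the same quadratic, pushes the radical $\rho$ into the numerator while leaving a polynomial denominator in $x$ and $y$. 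I would then form
$$\tilde{\mathfrak{S}}(x,y)=\tilde{B}(xy,1/y)+\tilde{B}(xy,1/x)-g(xy),$$
combine the three terms over a common denominator, and verify that the $\rho(xy)$ contributions cancel, leaving
$$\tilde{\mathfrak{S}}(x,y)=\frac{1}{(1-2xy)(1-x-y)}.$$
This is precisely the generating function treated in Section 5, where it was shown to produce a symmetric matrix with principal minor sequence $B_n$, so the $r=1$ claim follows.

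The main obstacle will be the algebraic bookkeeping in the symmetrization step. After the substitutions $y\mapsto 1/y$ and $y\mapsto 1/x$, each of the three terms takes the form $(\text{rational in }x,y)+\rho(xy)\cdot(\text{rational in }x,y)$, and bringing the pieces to a common denominator without losing track of the $\rho(xy)$ parts is the delicate step; once properly set up, cancellation of $\rho(xy)$ is forced by the quadratic identity for $f$, but the computation itself is nontrivial. A more conceptual alternative worth attempting first is to try to prove the stronger identity that the symmetrization of $\tilde{\mathfrak{R}}_r$ coincides with that of $\mathfrak{R}_{r+1}$ for all $r$; this is consistent with the $r=0$ match found above, and if it holds, both parts of the proposition reduce immediately to Proposition 1 without any new symmetrization calculation.
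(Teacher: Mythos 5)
Your $r=0$ reduction is correct and is exactly what the paper relies on: for $r=0$ the radical becomes $\sqrt{1-4x}$, so $\tilde{\mathfrak{R}}_0=\mathfrak{R}_1$ literally, and the $A_{n+1}$ claim follows from the earlier example. The $r=1$ case, however, contains a genuine error. The symmetrization of $\tilde{\mathfrak{R}}_1$ is \emph{not} $\frac{1}{(1-2xy)(1-x-y)}$; the computation you set up, carried out correctly, yields $\frac{1}{(1-xy)(1-x-y-xy)}$ (the paper's formula $\frac{1}{(1-xy)(1-x-y-rxy)}$ at $r=1$). A single entry already rules out your candidate: the $(2,2)$ entry of the symmetrization of a Riordan array $(g,f)$ is $t_{2,0}=[x^2]g(x)$, which here equals $[x^2]\frac{1}{(1-x)\sqrt{1-6x+x^2}}=1+3+13=17$, whereas $[x^2y^2]\frac{1}{(1-2xy)(1-x-y)}=6+4+4=14$; by contrast $[x^2y^2]\frac{1}{(1-xy)(1-x-y-xy)}=1+3+13=17$. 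For the same reason your fallback ``conceptual alternative'' --- that the symmetrization of $\tilde{\mathfrak{R}}_r$ coincides with that of $\mathfrak{R}_{r+1}$ --- is false for $r\ge 1$: only the principal minor sequences agree (which is precisely what the paper asserts at the end of its Section 7), so it cannot be used as a shortcut, and the verification you describe would simply fail to terminate at the claimed expression.

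The gap is repairable, but the final step must change. Having obtained the correct symmetrization $\frac{1}{(1-xy)(1-x-y-xy)}$, you cannot invoke the Section 5 result for $\frac{1}{(1-2xy)(1-x-y)}$ directly. Instead, multiply on the left by the unipotent Riordan array $(1-x,x)$ and on the right by its transpose; this multiplies the generating function by $(1-x)(1-y)$ and produces exactly the matrix $g(x,y)=\frac{(1-x)(1-y)}{(1-xy)(1-x-y-xy)}$ that Section 5 shows has principal minor sequence $B_n$. Since unipotent transformations preserve principal minors, the $r=1$ claim follows. This repaired argument is essentially the paper's: it computes the symmetrization of $\tilde{\mathfrak{R}}_r$ as $\frac{1}{(1-xy)(1-x-y-rxy)}$ and reads off both cases from generating functions already identified with $A_{n+1}$ and $B_n$.
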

\begin{proof} We find that the generating function of the symmetrization of $\tilde{\mathfrak{R}}_r$ is given by
$$\frac{1}{(1-xy)(1-x-y-rxy)}.$$ 
The result follows from this.
\end{proof}
The principal minor sequence of $\tilde{\mathfrak{R}}_2$ is the sequence that begins 
$$1, 4, 55,  2494,  365953,    171944344,\ldots,$$ for example. In general, the principal minor sequence of the symmetrization of $\mathfrak{R}_r$ is that of the symmetrization of $\tilde{\mathfrak{R}}_{(r-1)}$.

\section{The Riordan arrays $\mathfrak{R}_1$ and $\mathfrak{R}_2$}
The Riordan array $\mathfrak{R}_1$ is given by
$$\mathfrak{R}_1=\left(\frac{1}{(1-x)\sqrt{1-4x}},xc(x)\right)=\left((1-x)^3-x^3, x(1-x)\right).$$ It begins
$$\left(\begin{array}{rrrrrrr}
1 & 0 & 0 & 0 & 0 &  0 & \cdots \\
3 & 1 & 0 & 0 & 0 &  0& \cdots \\
9 & 4 & 1 & 0 & 0 &  0 & \cdots \\
29 & 14 & 5 & 1 & 0 & 0 & \cdots \\
99 & 49 & 20 & 6 &1 & 0 & \cdots \\
351 & 175 & 76 & 27 & 7 & 1 & \cdots \\
\vdots & \vdots & \vdots & \vdots & \vdots & \vdots & \ddots
\end{array}\right).$$
The general term of this matrix can be expressed as 
$$t_{n,k}=\sum_{j=0}^{n-k}\binom{k+2j}{j}$$ for $k \le n$, and $0$ otherwise.
It embeds into the Riordan array $\left(\frac{(1-x)^3-x^3}{(1-x)}, x(1-x)\right)^{-1}$, which is \seqnum{A361654}. The $(n,k)$-th element of this matrix  gives the number of nonempty subsets of $\{1,...,2n-1\}$ with median $n$ and minimum $k$. 

The symmetrization of $\mathfrak{R}_1$ has its general $(n,k)$-th term given by
$$s_{n,k}=[k \le n] \sum_{j=0}^k \binom{n-k+2j}{j} + [k>n] \sum_{j=0}^n \binom{k-n+2j}{j}.$$ Here, we use the Iverson bracket $[P]$ which evaluates to $1$ if $P$ is true, and $0$ otherwise \cite{Concrete}.
This symmetric matrix begins 
$$\left(\begin{array}{rrrrrrr}
1 & 1 & 1 & 1 & 1 &  1 & \cdots \\
1 & 3 & 4 & 5 & 6 &  7& \cdots \\
1 & 4 & 9 & 14 & 20 & 27 & \cdots \\
1 & 5 & 14 & 29 & 49 & 76 & \cdots \\
1 & 6 & 20 & 49 & 99 & 175 & \cdots \\
1 & 7 & 27 & 76 & 175 & 351 & \cdots \\
\vdots & \vdots & \vdots & \vdots & \vdots & \vdots & \ddots
\end{array}\right).$$

The matrix $\mathfrak{R}_2$ is given by 
$$\mathfrak{R}_2=\left(\frac{1}{(1-2x)\sqrt{1-4x}},xc(x)\right)=\left((1-x)^4-x^4, x(1-x)\right)^{-1}.$$ It begins
$$\left(\begin{array}{rrrrrrr}
1 & 0 & 0 & 0 & 0 &  0 & \cdots \\
4 & 1 & 0 & 0 & 0 &  0& \cdots \\
14 & 5 & 1 & 0 & 0 &  0 & \cdots \\
48 & 20 & 6 & 1 & 0 & 0 & \cdots \\
166 & 75 & 27 & 7 &1 & 0 & \cdots \\
584 & 276 & 110 & 35 & 8 & 1 & \cdots \\
\vdots & \vdots & \vdots & \vdots & \vdots & \vdots & \ddots
\end{array}\right).$$
Its general term $t_{n,k}$ is given by 
$$t_{n,k}=\sum_{j=0}^{n-k}2^{n-j-k}\binom{k+2j}{j}$$ for $k \le n$, and $0$ otherwise.

In general, the Riordan array $\mathfrak{R}_r$ will have general term $\sum_{j=0}^{n-k}r^{n-j-k}\binom{k+2j}{j}$ for $k \le n$, and $0$ otherwise.

We have a canonical factorization of the arrays $\mathfrak{R}_r$ in terms of the Catalan matrix $(c(x), xc(x))$ \seqnum{A033184} as follows.
$$ \mathfrak{R}_r = (c(x), xc(x))\cdot \left(\frac{1-x}{(1-2x)(1-rx+rx^2)}, x\right).$$ 

\section{Conclusions}
The significance of this note is that there exists a family of simply defined Riordan arrays whose symmetrizations appear to be closely linked to important outputs of certain integrable lattice models. On the one hand, this points to those lattice models being the related to many objects of combinatorial interest - in this case, to elements of the Riordan group. On the other hand, it shows that Riordan arrays can play important roles in unexpected and important areas seemingly far removed from their origin. Of course, the Riordan array $\left(\frac{1}{1-x}, \frac{x}{1-x}\right)$, that is, Pascal's triangle $\left(\binom{n}{k}\right)$ \seqnum{A007318}, has always played an important role in the analysis of lattice models.

\bigskip
\hrule
\bigskip
\noindent 2020 {\it Mathematics Subject Classification}: 
Primary 15B36; Secondary 05A15, 11B83, 11C20, 15A15, 82B20, 82B23
.
\noindent \emph{Keywords:} Robbins number, Riordan array, integrable lattice model, six-vertex model, twenty-vertex model, generating function.

\bigskip
\hrule
\bigskip
\noindent (Concerned with sequences
\seqnum{A007318},
\seqnum{A005130},
\seqnum{A033184},
\seqnum{A358069}, and
\seqnum{A361654}).

\end{document}